\newtheorem{thm}{Theorem}
\newtheorem{lem}[thm]{Lemma}
\newcommand{\cco}{\llbracket}
\newcommand{\ccf}{\rrbracket}
\newcommand{\po}{\left(}
\newcommand{\pf}{\right)}
\newcommand{\R}{\mathbb R}
\newcommand{\E}{\mathbb E}
\newcommand{\na}{\nabla}
\newcommand{\dd}{\text{d}}
\newcommand{\ent}{\text{Ent}}
\newcommand{\Sp}{\mathcal S_N^{>0}\po\R\pf}
\newcommand{\An}{\mathcal A_N\po\R\pf}
\newcommand{\Spe}{\mathcal S_N^{\geqslant 0}\po\R\pf}
\title{Optimal linear drift for the speed of convergence of an hypoelliptic diffusion}
\author{Arnaud Guillin, Pierre Monmarché}
\begin{document}

\maketitle
\abstract{Among all generalized Ornstein-Uhlenbeck processes which sample the same invariant measure and for which the same amount of randomness (a $N$-dimensional Brownian motion) is injected in the system, we prove that the asymptotic rate of convergence is maximized by a non-reversible hypoelliptic one\footnote{This paper has been published in \emph{Electron. Commun. Probab. Vol. 21 (2016)}, followed by an erratum. Later on, some remaining typos in the last paragraph of Section 3 (description of the algorithm to construct an optimal drift)  were kindly pointed out to us by Lancelot Da Costa. The present version already contains the changes indicated in the erratum, and  the end of Section 3 has been corrected.}.}

\section{Introduction}

For a potential $V:\R^N\rightarrow \R$ such that $\int e^{-V}<\infty$, consider $\mu$ the associated Gibbs law, namely the  probability measure with a density with respect to the Lebesgue measure proportional to $e^{-V}$. In order to compute expectations with respect to $\mu$, Markov Chain Monte Carlo (MCMC) algorithms are widely spread. Such an algorithm is based on an ergodic Markov process $(X_t)_{t\geq0}$ whose unique invariant law is $\mu$, so that for $T$ large enough $X_T$ is not far to be distributed according to $\mu$. The efficiency of the algorithm is directly linked to the rate of convergence of $X$ toward its equilibrium, which is why a fair amount of work has been devoted to accelerating this convergence (see \cite{LelievreFreeEnergy} and references within). In particular, since there are many possible Markov processes to sample the same equilibrium $\mu$, the question arises to choose the fastest, if any.

Along with those obtained from a Metropolis-Hasting procedure (see e.g. \cite{Lelievre2006} and references within), one of the most classical Gibbs sampler is the Fokker-Planck diffusion that solves the SDE
\begin{eqnarray}\label{EquaFP}
\dd X_t &=& -\na V(X_t)\dd t + \sqrt{2}\dd B_t
\end{eqnarray}
where $B$ is a standard $N$-dimensional Brownian motion. Its generator is
\begin{eqnarray*}
L&=& -\na V\cdot \na  + \Delta 
\end{eqnarray*}
where we recall the generator of a Markov process $X$ is formally defined by
\begin{eqnarray*}
Lf(x) &=& \po\partial_t\pf_{|t=0} \E\po f(X_t)\ |\ X_0=x\pf.
\end{eqnarray*}
The Fokker-Planck diffusion is a reversible process in the sense its generator is self-adjoint in $L^2(\mu)$. This property is of theoretical interest but, from a practical point of view, reversible processes are usually not optimal with regard to their speed of convergence. A particular technique for improving the convergence of $X$ to $\mu$ is to add a divergence-free (with respect to $\mu$) drift $b$, namely to consider the SDE
\begin{eqnarray}\label{EquaIrreversible}
\dd X_t &=& \po-\na V(X_t)+b(X_t)\pf\dd t + \sqrt{2}\dd B_t
\end{eqnarray}
with $b$ such that $\na\cdot\po b e^{-V}\pf = 0$ where $\na\cdot$ stands for the divergence operator. That way, the equilibrium is not affected, but the process is no longer reversible and the convergence is improved (cf. \cite{HHS1,HHS2,Arnold1,Lelievre2012}). It can be easily seen if we consider for example convergence in $L^2(\mu)$, where the spectral gap of the reversible dynamic is a lower bound for the speed of convergence of the non reversible one (just by comparison of Dirichlet forms), see \cite{HHS2}.

Another way to improve the convergence is to consider a kinetic process $(X,Y)$ where $X$ is the position and $Y=\dd X/\dd t$ is the velocity, which acts as an instantaneous memory (see \cite{Gadat2013,Lelievre2006,DiaconisMiclo}). For instance the Langevin diffusion
\begin{eqnarray}\label{EquaLangevin}
 & &  \left\{\begin{array}{rcl}
\dd X_t &=& Y_t \dd t\\
\dd Y_t &=& -\na V(X_t) \dd t - Y_t \dd t+ \sqrt 2 \dd B_t 
\end{array}\right.
\end{eqnarray}
admits $e^{-H}$ as an invariant measure where the Hamiltonian is $H(x,y) = V(x) + \frac12 |y|^2$. In particular, the first marginal of this equilibrium is $\mu$. The Langevin diffusion is non-reversible and moreover it is hypoelliptic. It has been observed in \cite{Lelievre2006} that it may converge faster than the reversible Fokker-Planck diffusion in some applied problems. It recently regained much interest under the name Hamiltonian  Monte Carlo methods \cite{GC11}.

For both dynamics \eqref{EquaIrreversible} and \eqref{EquaLangevin} it is difficult for a general potential $V$ to obtain sharp theoretical bounds on the rates of convergence (see \cite{MonmarcheRecuitHypo} for consideration on this matter in the metastable case, namely the regime $\varepsilon\rightarrow 0$ with the potential $V_\varepsilon = \frac1\varepsilon V$ where $V$ has several local minima). A particular simple situations is the case where $V$ is quadratic or, in other words,  $\mu$ is a Gaussian measure. Of course MCMC algorithms are not really relevant in practice regarding sampling according to Gaussian measures, but then the exact rates of convergence for \eqref{EquaIrreversible} and \eqref{EquaLangevin} are trackable (see \cite{Arnold2014,MonmarcheGamma} and  below).

In this context, the purpose of the present work is to answer the problem raised in \cite{Lelievre2012}, namely: for a given Gaussian law $\mu$, is it possible to find the optimal divergence-free linear drift one can add to \eqref{EquaLangevin} in order to obtain the largest rate of convergence ? More generally, what is the largest rate of convergence one can get when sampling according to $\mu$  using a (possibly hypoelliptic) Markov diffusion with linear drift and constant diffusion coefficients ?

Obviously this question is ill-posed since the invariant measure of $(W_t)_{t\geq 0}=\po X_{\lambda t}\pf_{t\geq0}$ is still $\mu$ for any $\lambda>0$, and $W$ goes $\lambda$ times faster than $X$ to equilibrium. Following \cite{Gadat2013} we will thus work under the additional assumption that the total amount of randomness instantaneously injected in the system (that is, the trace of the diffusion matrix) is prescribed.

In the following, we will first introduce the main notations and recall basic facts about generalized Ornstein-Uhlenbeck processes. In Section 2, we present our main results, giving a positive and definite answer to the problem raised in \cite{Lelievre2012}. Section 3 is dedicated to the proofs of our main results, whereas Section 4 presents numerical illustration of our results and present some thoughts on the general case we wish to tackle in the future.

\subsubsection*{Notations}

In this whole work,  $\mathcal M_{N}\po\R\pf$ is the set of $N\times N$ real matrices, $\Sp$ (resp. $\Spe$) the set of positive definite (resp. semi-definite)  symmetric ones and $\An$ is the set of anti-symmetric ones. The spectrum of a matrix $A$ is $\sigma(A)$, its trace is $Tr(A)$, its transpose is $A^T$ and vectors are considered as column matrices, so that the scalar product $x\cdot y$ is $x^T y$. %The divergence operator is denoted by $\na\cdot$ and the dual (in the Lebesgue sense) of an operator $L$ is denoted by $L'$, namely $\int f(x)Lg(x) \dd x = \int L' f(x) g(x) \dd x$.
 Finally $\Re(\lambda)$ stands for the real part of $\lambda\in\mathbb C$ and $diag(a_1,\dots,a_N)$ stands for the the diagonal matrix with coefficients $a_i$.

\subsubsection*{Basic facts about Ornstein-Uhlenbeck processes}

We recall here some facts whose proofs and details may be found for instance in  \cite{Arnold2014}. A generalized Ornstein-Uhlenbeck  process (OUP) is any diffusion with a linear drift and a constant matrix diffusion. In other words in dimension $N$ it is the solution of an SDE of the form
\begin{eqnarray*}
\dd X_t &=& A X_t \dd t + \sum_{j = 0}^N \sigma_j \dd B^j_t
\end{eqnarray*}
where $A$ is a constant matrix, the $\sigma_j$'s are constant vectors and the $B^j$'s are 1-dimensional independent Brownian motions. A Markov process is an OUP if and only if its generator is of the form
\begin{eqnarray}\label{DefGeneLAD}
L_{A,D} f(x) &:=& (Ax)^T \na f(x) + \na\cdot \po D \na f\pf(x)
\end{eqnarray}
where $D = \frac12\sum \sigma_j \sigma_j^T$ is a positive semi-definite matrix and $\na\cdot$  stands for the divergence operator. Recall that a measure $\mu$ is said invariant for $X$ (or equivalently for $L_{A,D}$) if $Law\po X_0\pf = \mu$ implies $Law\po X_t\pf = \mu$ for all $t\geq 0$. For an OUP, an invariant measure %exists if and only if all the eigenvalues of $A$ have a negative real part, and then any invariant measure
 is necessarily a (possibly degenerated) Gaussian distribution.

 On the other hand the process is hypoelliptic if and only if Ker$D$ does not contain any non-trivial subspace which is invariant by $A^T$, and in that case an invariant measure is necessarily unique and non-degenerated (it has a positive density with respect to the Lebesgue measure on $\R^N$).  In the case where the invariant measure exists and is unique, its density $\psi_\infty$ is the unique solution of $L_{A,D}'\psi = 0$ where 
\begin{eqnarray*}
L_{A,D}' f(x) &=& -(Ax)^T \na f(x) - \text{Tr}( A) f(x) + \na\cdot \po D \na f\pf(x)
\end{eqnarray*}
is the dual in the Lebesgue sense of $L_{A,D}$. 

We will focus mainly on generalized Ornstein-Uhlenbeck processes which have a non degenerate Gaussian distribution as invariant probability measure. Let $N\geq1$, $S\in\Sp$ and
\begin{eqnarray*}
\psi_\infty(x) &=& \frac{\po \det S\pf^{\frac12}}{\po 2\pi\pf^{\frac N2}} \exp\po \frac{- x^T S x}2\pf
\end{eqnarray*}
be the density of the (non-degenerated) Gaussian distribution with covariance matrix $S^{-1}$. The dual of $L_{A,D}$ in $L^2\po \psi_\infty\pf$ is then
\begin{eqnarray*}
L_{A,D}^* f(x) &=& \frac{1}{\psi_\infty(x)}L'_{A,D}\po f \psi_\infty\pf(x)\\
&=& -\po(2DS+A)x\pf^T \na f(x) + \na\cdot \po D \na f\pf(x)\\
&=& L_{-(2DS+A),D} f(x).
\end{eqnarray*}
Starting from an initial distribution $\psi_0$, the law $\psi_t$ and the density with respect to equilibrium $\frac{\psi_t}{\psi_\infty}$ at time $t$ of an OUP generated by $L_{A,D}$ are (weak) solutions of
\[\partial_t \psi_t = L_{A,D}'\psi_t\hspace{20pt}\text{and}\hspace{20pt} \partial_t \po \frac{\psi_t}{\psi_\infty} \pf = L_{A,D}^* \po \frac{\psi_t}{\psi_\infty} \pf.\]

\section{Main results}\label{SectionMainResult}

Let
\[\mathcal I(S) = \left\{ (A,D)\in \mathcal M_{N\times N}\po\R\pf \times \Spe,\ \text{Tr}D\leq N,\ L_{A,D}' \psi_\infty = 0\right\}\]
be the set of drift/diffusions matrices such that $\psi_\infty$ is invariant for the corresponding OUP and with at most the same amount of randomness injected in the system as the reversible dynamics with generator
\begin{eqnarray*}
L_{-S,I_N} f(x) &=& - \po S x\pf^T \na f(x) + \Delta f(x).
\end{eqnarray*}
 For $A\in\mathcal M_{d\times d}\po\R\pf $ we write
\begin{eqnarray*}
\rho(A) &=& \inf\{-\Re(\lambda),\ \lambda\in\sigma(A)\}.
\end{eqnarray*}
As we will see later, if $(A,D)\in\mathcal I(S)$ with $\rho(A)>0$ then $\psi_t \rightarrow \psi_\infty$ as $t\rightarrow \infty$ for all $\psi_0$, which implies $\psi_\infty$ to be the unique invariant measure of $L_{A,D}$, which is by consequence necessarily hypoelliptic.

Our main result identifies the maximum of $\rho(A)$ under the constraint $(A,D)\in {\mathcal I}(S)$. 
\begin{thm}\label{TheoPrincipal}
For all $S\in\Sp$,
\begin{eqnarray*}
\max \left\{ \rho(A),\  (A,D)\in \mathcal I(S)\right\}  &=& \max \sigma(S).
\end{eqnarray*}
%More precisely for all $\lambda\in\co 0,\max\sigma(S)\cf$ there exists $(A,D)\in \mathcal I(S)$ such that $\rho(A)=\lambda$.
\end{thm}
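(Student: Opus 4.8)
The plan is to turn the invariance constraint into a matrix (Lyapunov-type) identity, read off the upper bound from a short trace inequality, and then realize the bound by an explicit non-reversible, rank-deficient construction.

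First I would reformulate membership in $\mathcal I(S)$. Writing $\psi_\infty\propto e^{-x^TSx/2}$ and computing $L'_{A,D}\psi_\infty$, the condition $L'_{A,D}\psi_\infty=0$ amounts to the vanishing of a quadratic form in $x$ plus a constant. The quadratic part yields the identity
\[ SA+A^TS = -2SDS. \]
Since the total mass is preserved, $\int L'_{A,D}\psi_\infty\,\dd x=0$ holds identically; integrating against the Gaussian then forces the constant part to vanish automatically and produces, as a by-product, the trace relation $\mathrm{Tr}(A)=-\mathrm{Tr}(DS)$. Hence $(A,D)\in\mathcal I(S)$ is equivalent to $SA+A^TS=-2SDS$ together with $D\in\Spe$ and $\mathrm{Tr}D\le N$, and such pairs are exactly those of the form $A=-DS+S^{-1}C$ with $C\in\An$ arbitrary (a computation of $SA+A^TS$ confirms this parametrization).

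For the upper bound, set $\bar\lambda=\max\sigma(S)$. Every eigenvalue $\lambda$ of $A$ satisfies $\Re\lambda\le-\rho(A)$, and $\mathrm{Tr}(A)=\sum\Re\lambda$ is real, so $\mathrm{Tr}(A)\le-N\rho(A)$. Combining with $\mathrm{Tr}(A)=-\mathrm{Tr}(DS)$ gives $N\rho(A)\le\mathrm{Tr}(DS)$. Diagonalizing $D=\sum_i d_i u_iu_i^T$ with $d_i\ge0$ yields $\mathrm{Tr}(DS)=\sum_i d_i\,u_i^TSu_i\le\bar\lambda\,\mathrm{Tr}(D)\le N\bar\lambda$, whence $\rho(A)\le\bar\lambda$ for every admissible pair.

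For achievability I would reverse-engineer the equality cases, which force $\mathrm{Tr}D=N$, all eigenvalues of $A$ to have real part exactly $-\bar\lambda$, and $D$ to be supported on the top eigenspace of $S$. So I pick a unit eigenvector $u$ with $Su=\bar\lambda u$, set $D=Nuu^T$ (then $\mathrm{Tr}D=N$ and $\mathrm{Tr}(DS)=N\bar\lambda$), and must choose $C\in\An$ so that $A=-N\bar\lambda\,uu^T+S^{-1}C$ has $\rho(A)=\bar\lambda$. Conjugating by $S^{1/2}$ (so the invariant law becomes standard Gaussian) reduces this to finding an antisymmetric $\tilde J$ such that $\tilde A=-N\bar\lambda\,uu^T+\tilde J$ has spectrum on the vertical line $\{\Re=-\bar\lambda\}$, i.e. $\tilde A+\bar\lambda I$ has purely imaginary spectrum. \textbf{This eigenvalue placement is the main obstacle}: the single strong dissipation $-N\bar\lambda\,uu^T$, of trace $-N\bar\lambda$, must be redistributed by the rotation $\tilde J$ so that \emph{each} mode acquires real part exactly $-\bar\lambda$; the trace is automatically correct, but equality of all real parts is the genuine constraint. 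I would resolve it by exhibiting an explicit tridiagonal (``chain'') antisymmetric coupling $\tilde J$ linking $u$ to the remaining basis vectors and computing the characteristic polynomial, showing its coefficients can be tuned through the coupling strengths to match those of $\prod_k\big((\lambda+\bar\lambda)^2+\omega_k^2\big)$, all of whose roots lie on $\{\Re=-\bar\lambda\}$; the cases $N=2,3$ already exhibit a solvable system for the couplings, and controllability of the pair $(\tilde J,u)$ is what guarantees solvability in general. Once such $A$ is constructed, $\rho(A)=\bar\lambda>0$, so by the remark preceding the statement the resulting OUP is hypoelliptic with unique invariant law $\psi_\infty$; thus $(A,D)\in\mathcal I(S)$ attains $\rho(A)=\max\sigma(S)$, completing the proof.
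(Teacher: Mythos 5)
Your reformulation and your upper bound are correct, and in fact more self-contained than the paper's. The Lyapunov identity $SA+A^TS=-2SDS$, the parametrization $A=-DS+S^{-1}C$ with $C\in\An$ (equivalent to the paper's Lemma~\ref{LemJ}, $A=-(D+J)S$, via $C=-SJS$), and the chain $N\rho(A)\leq-\mathrm{Tr}(A)=\mathrm{Tr}(DS)\leq\max\sigma(S)\,\mathrm{Tr}(D)\leq N\max\sigma(S)$ give a complete elementary proof of the inequality $\max\{\rho(A)\}\leq\max\sigma(S)$; the paper instead conjugates by $S^{1/2}$ and quotes \cite[Propositions 1 and 4]{Lelievre2012} for the intermediate bound $\rho(-\widetilde D-\widetilde J)\leq\mathrm{Tr}\,\widetilde D/N$.

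The genuine gap is in the attainability step, exactly the point you flag as ``the main obstacle''. You must produce an antisymmetric $\tilde J$ such that \emph{all} eigenvalues of $-N\bar\lambda\,uu^T+\tilde J$ (with $\bar\lambda=\max\sigma(S)$) have real part equal to $-\bar\lambda$. Your computations for $N=2,3$ are fine, but the argument offered for general $N$ --- that ``controllability of the pair $(\tilde J,u)$ is what guarantees solvability'' --- is not a theorem. Pole-placement results assume the feedback gain is a free design variable: for a controllable pair $(M,b)$ one may place the spectrum of $M-bk^T$ by choosing $k$. Here the roles are inverted: the rank-one dissipative term $-N\bar\lambda\,uu^T$ is \emph{fixed} (its direction and strength are imposed by $D$ and $S$), and the free variable is the skew-symmetric part $\tilde J$ itself, a constrained, non-standard design problem to which no off-the-shelf controllability result applies; controllability of $(\tilde J,e_1)$ by itself only prevents the spectrum from splitting into an undamped invariant block, it does not force equality of all real parts. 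Matching the characteristic polynomial of your tridiagonal ansatz against $\prod_k\po(\lambda+\bar\lambda)^2+\omega_k^2\pf$ leads to a polynomial system in the squared couplings whose solvability (with nonnegative squares) for every $N$ is precisely the hard content of the theorem, and it is left unproved. The paper closes this hole by invoking \cite[Propositions 1 and 4]{Lelievre2012} (dealing with the non-invertibility of the rank-one $\widetilde D$ either by inspecting those proofs or by approximating with $\widetilde D+\varepsilon I_N$), and an explicit construction is displayed in the proof of Theorem~\ref{TheoPrincipal2}: in an orthonormal basis where $\widetilde D$ has constant diagonal, take $\nu_1<\dots<\nu_N$ distinct and $\widetilde J_{kl}=(\nu_k+\nu_l)/(\nu_k-\nu_l)$, for which the commutator identity \eqref{inter} forces $\sigma\po\widetilde D+\widetilde J\pf\subset\mathrm{Tr}\,\widetilde D/N+i\R$. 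To complete your proof you would need either to cite such a result or to actually carry out the inverse-spectral analysis of your chain for arbitrary $N$.
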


\bigskip

For an OUP with drift matrix $A$ the rate of convergence to equilibrium is $\rho(A)$ (see \cite{MonmarcheGamma} and below). Hence Theorem \ref{TheoPrincipal} states that it is possible to sample an OUP that converges at rate $\max \sigma(S)$ to $\psi_\infty$ using the same amount of randomness as the classical reversible dynamics with generator $L_{-S,I_N}$, while the latter converges at rate
\[\rho(-S) = \min \sigma(S).\]
This should be compared to the results of Hwang, Hwang-Ma and Sheu \cite{HHS1} or of Leli\`evre, Nier and Pavliotis \cite{Lelievre2012} (the first one being anterior, and the second one giving more explicit bounds for the convergence) that reads
\begin{eqnarray*}
\max \left\{ \rho(A),\  A\text{ s.t. }(A,I_N)\in \mathcal I(S)\right\}   &=& \frac{\text{Tr} S}{N}
\end{eqnarray*}
which is the arithmetic mean of all eigenvalues of $S$. On the other hand, for $(A,D)\in \mathcal I(S)$, the process is reversible if and only if $A=-(2DS+A)$, namely $A=-DS$, and for $D = \frac{N}{\text{Tr} S^{-1}} S^{-1}$ this gives  
\begin{eqnarray*}
\max \left\{ \rho(A),\  (A,D)\in \mathcal I(S),\ L_{A,D}^*=L_{A,D}\right\}   &=&  \frac{N}{\text{Tr}S^{-1}},
\end{eqnarray*}
which is the harmonic mean of the eigenvalues of $S$. Finally, considering $(1-\varepsilon) L_{A,D} + \varepsilon L_{-S,I_N}$ for any arbitrary $\varepsilon$,
\begin{eqnarray*}
\sup\left\{ \rho(A),\  (A,D)\in \mathcal I(S),\ D\text{ invertible}\right\}  &=& \max \sigma(S).
\end{eqnarray*}
 Note that
\[ \min \sigma(S)\hspace{10pt} \leq \hspace{10pt} \frac{N}{\text{Tr}S^{-1}} \hspace{10pt} \leq  \hspace{10pt} \frac{\text{Tr} S}{N} \hspace{10pt} \leq \hspace{10pt} \max \sigma(S) \]
and that the equalities hold only when $S$ is a homogeneous dilation, in which case
  no non-reversible dynamics can yield any improvement of the rate of convergence to equilibrium. On the other hand when the eigenvalues have different orders of magnitude (which means the problem is multi-scale; cf. \cite[Fig. 4]{Lelievre2012} where $S$ has uniformly distributed coefficients in $[0,1]$), the improvement is already clear from the reversible diffusion (with $D=I_N$) to the non-reversible (but still elliptic) ones, but yet it seems even more drastic when hypoelliptic dynamics are allowed. Obviously, the cost to pay for an optimal asymptotic speed of convergence is an initial delay for small times.Moreover, due to possibly large coefficients in the drift, a theoretically optimal continuous-time diffusion may lead, after discretization, to a not-so-efficient algorithm implemented in practice. We will leave aside this consideration, and only concentrate on the continuous-time problem.

We will focus here on the convergence in the entropy sense, ensuring for example also convergence in total variation via Pinsker's inequality. However, the same line of reasoning will also work for $L^2$ convergence or $\Phi-$entropies (see \cite{BolleyGentil}). More precisely, for a measure $\mu$, denote by
\begin{eqnarray*}
\ent_{\mu}(h) &:=& \int h \ln h \dd \mu - \po \int h \dd \mu \pf \ln \po \int h \dd \mu \pf 
\end{eqnarray*}
the entropy of a positive function $h$ with respect to $\mu$. For the reversible elliptic OUP with generator $L_{-S,I_N}$, it is well known that for all $h>0$
\begin{eqnarray*}
\ent_{\psi_\infty}\po e^{tL_{-S,I_N}^*} h \pf &\leq & e^{-2\rho(-S)t}\ent_{\psi_\infty}\po   h \pf.
\end{eqnarray*}
This is nothing else than an equivalent formulation of the Gaussian logarithmic Sobolev inequality of Nelson, see for example \cite{BGL} and references therein.

For a general OUP, if $(A,D)\in\mathcal I(S)$, according to \cite[Corollary 12]{MonmarcheGamma} there exists a constant $c\geq 1$ such that for all $h>0$
\begin{eqnarray*}
\ent_{\psi_\infty}\po e^{tL_{A,D}^*} h \pf &\leq & ce^{-2\rho(A)t}\ent_{\psi_\infty}\po   h \pf
\end{eqnarray*}
at least if $A$ is diagonalizable (when it is not the case, a polynomial in $t$ prefactor should be added). For a non-reversible yet elliptic OUP, $c$ may be strictly greater than 1 due to a change of norm, exactly as when we   consider  the transport semi-group $e^{tL_{A,0}} f = f\po e^{tA} \cdot \pf$ alone and write
\[ | e^{tA} x|^2 \leq |Q|^2|Q^{-1}e^{tA}Q|^2|Q^{-1} x|^2 \leq e^{-2\rho(A) t}|Q|^2|Q^{-1}|^2 |x|^2.    \]
 When the process is both non-reversible and non-elliptic, there are two reasons for $c$ to be greater than 1: the change of norm for $e^{tA}$, and the initial regularization which is really slower than in the elliptic case. Indeed, the part of $c$ which is due to slow regularization may badly behave with $N$. More precisely, since the optimal $(A,D)\in \mathcal I(S)$ we will consider will be very degenerated (the rank of $D$ being 1),  \cite[Remark p.16]{MonmarcheGamma} yields a constant $c$ of order $N^{40 N^2}$ which is, at the very least, absolutely awful. Of course this estimate is the result of a succession of rough bounds and a more careful (and involved) analysis could certainly refine it, but it is unclear whether the optimal bound is less than exponential with respect to $N$.

  Fortunately this problem disappears if we start the dynamics with an elliptic one and then switch to the hypoelliptic optimal one, ensuring thus first a quick regularization property.
 
 \begin{thm}\label{TheoPrincipal2}
 For any $C>1$ we can construct $(A,D)\in \mathcal I(S)$ such that for all $h>0$, with finite entropy, and for all $t,t_0>0$ with $t\geq t_0$,
 \begin{eqnarray*}
\text{\emph{Ent}}_{\psi_\infty}\po e^{(t-t_0)L_{A,D}^*} e^{t_0L_{-S,I_N}} h \pf &\leq & C \frac{ 1}{2 t_0 \min\sigma(S)}  e^{-2\po\max \sigma(S)\pf (t-t_0)}\text{\emph{Ent}}_{\psi_\infty}\po   h \pf.
\end{eqnarray*}
Moreover it is possible to construct $(A,D)\in \mathcal I(S)$ with $\| A\|_F \leq 4N^2\sqrt{ \frac{\po\max\sigma(S)\pf^3}{\min\sigma(S)}}$ (where $\| A\|_F = \sqrt{\text{Tr}\po A^T A\pf}$ is the Frobenius norm) 
such that for all $h>0$, with finite entropy, and for all $t\ge t_0>0$
 \begin{eqnarray}\label{EquationErratum}
\text{\emph{Ent}}_{\psi_\infty}\po e^{(t-t_0)L_{A,D}^*} e^{t_0L_{-S,I_N}} h \pf &\leq &  \frac{ 1 }{ t_0 \min\sigma(S) }  e^{-2\po\max \sigma(S)\pf (t-t_0)}\text{\emph{Ent}}_{\psi_\infty}\po   h \pf.
\end{eqnarray}
 \end{thm}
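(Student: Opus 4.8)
The plan is to split the evolution at time $t_0$ and to interpolate between the two semigroups through a single quadratic, Fisher-type functional. For $P\in\Sp$ set
\[ \mathcal J_P(f)\ =\ \int \frac{\po\na f\pf^T P\,\na f}{f}\,\dd\psi_\infty ,\]
and write $m=\min\sigma(S)$, $M=\max\sigma(S)$, $s=t-t_0$ and $g=e^{t_0 L_{-S,I_N}}h$. The bound will follow from three facts: (i) a sharp exponential decay $\mathcal J_P\po e^{sL_{A,D}^*}g\pf\le e^{-2Ms}\mathcal J_P(g)$ along the hypoelliptic dual semigroup; (ii) the comparison $\ent_{\psi_\infty}(f)\le \tfrac1{2m}\mathcal J_P(f)$, valid as soon as $P\succeq I_N$; and (iii) a short-time regularisation estimate $\mathcal J_P(g)\le \tfrac{\lambda_{\max}(P)}{t_0}\,\ent_{\psi_\infty}(h)$. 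Chaining (ii), (i), (iii) at time $s$ yields the theorem with constant $\lambda_{\max}(P)/(2m\,t_0)$, i.e. the condition number of $P$ after the normalisation $\lambda_{\min}(P)=1$.

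For (i) I would start from the optimal pair $(A,D)$ built in the proof of Theorem \ref{TheoPrincipal}, for which $\rho(A)=M$, and look for $P$ solving the Lyapunov dissipation inequality
\[ \widetilde A^{\,T}P+P\widetilde A\ \preceq\ -2M\,P,\qquad \widetilde A:=S^{-1}A^T S,\]
$\widetilde A$ being the drift of $L_{A,D}^*=L_{\widetilde A,D}$ (so that $\sigma(\widetilde A)=\sigma(A)$ and $\rho(\widetilde A)=M$). Differentiating $\mathcal J_P$ along $\partial_s f_s=L_{\widetilde A,D}f_s$, the first-order term is $\int \tfrac{(\na f_s)^T(\widetilde A^{\,T} P+P\widetilde A)\na f_s}{f_s}\,\dd\psi_\infty$, dominated by $-2M\mathcal J_P(f_s)$, while the second-order term produced by $D$ is non-positive because $D\succeq 0$; this gives $\tfrac{\dd}{\dd s}\mathcal J_P(f_s)\le -2M\mathcal J_P(f_s)$ and hence (i). For (ii), since $P\succeq I_N$ one has $\mathcal J_P(f)\ge I(f)$, the ordinary Fisher information, and the Gaussian logarithmic Sobolev inequality for $\psi_\infty$ (whose constant is $1/(2m)$) yields $\ent_{\psi_\infty}(f)\le\tfrac1{2m}I(f)\le\tfrac1{2m}\mathcal J_P(f)$.

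The regularisation (iii) is the only place the elliptic prefactor is used. For the reversible flow $h_\tau=e^{\tau L_{-S,I_N}}h$ the de Bruijn identity reads $\tfrac{\dd}{\dd\tau}\ent_{\psi_\infty}(h_\tau)=-I(h_\tau)$, and the Bakry--Émery computation (again with curvature $m>0$) shows $\tau\mapsto I(h_\tau)$ is non-increasing; integrating over $[0,t_0]$ gives $t_0\,I(g)\le \ent_{\psi_\infty}(h)-\ent_{\psi_\infty}(g)\le \ent_{\psi_\infty}(h)$, whence $I(g)\le \ent_{\psi_\infty}(h)/t_0$ and $\mathcal J_P(g)\le\lambda_{\max}(P)\,I(g)\le \tfrac{\lambda_{\max}(P)}{t_0}\ent_{\psi_\infty}(h)$. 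It then remains to fix the construction. For the first inequality I would produce, for each $C>1$, a pair whose dissipation matrix $P$ has condition number at most $C$, along a family whose Frobenius norm $\|A\|_F$ necessarily blows up as $C\downarrow1$ — consistent with the fact that $\mathrm{cond}(P)=1$ is excluded unless $S$ is a homogeneous dilation. For the second, explicit inequality I would instead keep the concrete pair of Theorem \ref{TheoPrincipal}, read $\|A\|_F\le 4N^2\sqrt{M^3/m}$ off its entries, and exhibit for it an explicit $P$ with $\mathrm{cond}(P)=2$, which gives the constant $1/(t_0 m)$.

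The hard part is (i) for this strongly degenerate pair. Because the optimal $D$ has rank one, $\widetilde A$ is highly non-normal and sits exactly on the boundary $\rho(\widetilde A)=M$, so solving $\widetilde A^{\,T}P+P\widetilde A\preceq-2MP$ with $P\succ0$ forces every eigenvalue of $A$ with real part $-M$ to be semisimple — otherwise only a polynomially corrected rate survives — and the attainable size of $\mathrm{cond}(P)$, which is precisely the constant in the statement, is dictated by this non-normality through the constraint $\mathrm{Tr}\,D\le N$. Pinning down $P$ (with condition number below $C$ for the first bound, and in closed form with $\mathrm{cond}(P)=2$ together with the Frobenius estimate on $A$ for the second) is therefore the crux. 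A secondary, more routine point is to check that the second-order contribution to $\tfrac{\dd}{\dd s}\mathcal J_P$ is genuinely non-positive and that the differentiations are licit for an initial datum of finite entropy — here the smoothing already performed during the elliptic phase is what makes these manipulations legitimate.
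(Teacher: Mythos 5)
Your architecture is exactly that of the paper's own proof: split the evolution at $t_0$, use the reversible elliptic flow to regularize, then run the hypoelliptic flow; control a matrix-weighted Fisher functional $\mathcal J_P$ through a Lyapunov-type matrix inequality at rate $2\max\sigma(S)$; convert back to entropy via the Gaussian log-Sobolev inequality; and bound $\mathcal J_P$ at time $t_0$ by $\mathrm{Ent}_{\psi_\infty}(h)/t_0$ using $\Gamma_2\geq 0$ (de Bruijn plus monotonicity of the Fisher information). Your steps (ii) and (iii) are correct and complete, and your reduction in step (i) --- find $P\succ 0$ satisfying the dissipation inequality, with the constant of the theorem governed by the condition number of $P$ --- is precisely the mechanism of the paper: its identity \eqref{inter} is exactly such a Lyapunov identity, and in fact with equality. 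Treating the differentiation of $\mathcal J_P$ along the degenerate flow (including non-positivity of the second-order term in the entropy case) as citable is fair; this is the content of \cite[Lemma 8]{MonmarcheGamma}, which the paper also invokes.

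The genuine gap is that you never construct the pair $(A,D)$ together with its certificate $P$ --- you say yourself that ``pinning down $P$ \dots is therefore the crux'', and that crux is the actual content of the theorem, whose claim is ``we can construct $(A,D)\in\mathcal I(S)$ such that\dots''. Moreover, your proposed starting point, the optimal pair from the proof of Theorem \ref{TheoPrincipal}, is not available in the form you need: that proof produces $D$ explicitly (rank one, built from a top eigenvector) but obtains the optimal antisymmetric part non-constructively, by invoking \cite[Propositions 1 and 4]{Lelievre2012}; for such a generic optimizer, sitting on the boundary $\rho(A)=\max\sigma(S)$, nothing controls the conditioning of admissible Lyapunov solutions $P$, so neither ``$\mathrm{cond}(P)$ below $C$'' nor an explicit $P$ with $\mathrm{cond}(P)=2$ can simply be read off. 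What fills this gap in the paper is a coupled construction in which the optimizer and its certificate are built simultaneously: after an orthonormal change of variables making $v=(1,\dots,1)^T$ an eigenvector for $\lambda=\max\sigma(S)$, take $D=vv^T$, pick $0<\nu_1<\dots<\nu_N$, set $Q=\mathrm{diag}(\nu_1,\dots,\nu_N)$, and let $\widetilde J$ be antisymmetric with off-diagonal entries $(\nu_k+\nu_l)/(\nu_k-\nu_l)$; then \cite[Lemma 2 and Equation (38)]{Lelievre2012} gives $Q\widetilde J-\widetilde JQ=-DQ-QD+2Q$, which is exactly $(D+\widetilde J)^TQ+Q(D+\widetilde J)=2Q$, so that $A=-\lambda S^{-\frac12}(D+\widetilde J)S^{\frac12}$ satisfies your Lyapunov inequality with equality for $P=S^{-\frac12}QS^{-\frac12}$. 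The free parameters then do all the quantitative work: letting $\nu_N/\nu_1\to 1$ gives the first claim for any $C>1$ (with, as you correctly anticipated, $\|A\|_F$ blowing up), while $\nu_k=N+k$ gives $\nu_N/\nu_1\leq 2$ and, by direct computation on the entries of $\widetilde J$, the bound $\|A\|_F\leq 4N^2\sqrt{\lambda^3/\min\sigma(S)}$. Without this construction (or an equivalent one), your proposal is an accurate reduction of the theorem, not a proof of it.
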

It is thus possible to completely quantify the initial loss, which, due to the role of the initial warm up via the reversible diffusion, boils down to the change of norm in the energy.

%\bigskip
\newpage

\textbf{Remarks:}

\begin{itemize}
\item  As a function of $t_0$, the rhs of \eqref{EquationErratum} is minimal for $t_0^{-1} = 2\max \sigma(S)$, for which 
 \begin{eqnarray*} 
\text{\emph{Ent}}_{\psi_\infty}\po e^{(t-t_0)L_{A,D}^*} e^{t_0L_{-S,I_N}} h \pf &\leq &  2e \frac{ \max \sigma(S) }{   \min\sigma(S) }  e^{-2t \max \sigma(S) }\text{\emph{Ent}}_{\psi_\infty}\po   h \pf.
\end{eqnarray*}
\item The proof furnishes an explicit, algorithmic construction of an optimal $(A,D)$, with $D$ of rank 1. More generally, it is not hard to see from the proof that $D$ can be chosen with a rank at most the dimension of  the eigenspace associated to $\max \sigma(S)$.
\item In order to sample according to an $N$-dimensional Gaussian measure $\psi_\infty$, we can always add an $(N+1)^{th}$ auxiliary variable  and sample, with a given amount of randomness and at a rate arbitrarily large, according to an $N+1$-dimensional Gaussian measure whose first $N$-dimensional marginal is $\psi_\infty$. Of course, this should be counteracted in practice by numerical problems due to the discretization of the dynamics.
\item In the same line, we can also consider the question of the kinetic process \eqref{EquaLangevin}. More precisely, given $S\in \Sp$, then the equilibrium of the $2N$-dimenstional process $(X,Y)$ that solves
\begin{eqnarray}\label{EqKineticGaussien} 
 & &  \left\{\begin{array}{rcl}
\dd X_t &=& Y_t \dd t\\
\dd Y_t &=& - \nu S X_t \dd t - \frac1\nu Y_t \dd t+ \sqrt 2 \dd B_t 
\end{array}\right.
\end{eqnarray}
is $\psi_\infty(\dd x)\otimes \gamma_{\nu}\po \dd y\pf$, where $\gamma_{\nu}(\dd y) = \frac{1}{\sqrt{2\pi\nu^2}}e^{-\frac1{2\nu^2}|y|^2}\dd y$ is the Gaussian distribution with covariance matrix $\nu^2 I_N$. If the aim is to sample according to $\psi_\infty$, then the choice for the second $N$-dimensional marginal is open. Hence, we have to chose $\nu$ in order to maximize the rate of convergence to equilibrium of the whole process.

If $v$ is an eigenvector of $S$ associated to an eigenvalue $\lambda$, then $(v, -r v)$ is an eigenvector of
\[ A =\begin{pmatrix}
0 & I_N\\
-\nu S & -\frac1\nu I_N
\end{pmatrix}\]
associated to the eigenvalue $-r$ if and only if $r^2 - \frac1\nu r + \lambda \nu=0$, namely
\begin{eqnarray*}
r & = & \frac1{2\nu}\po 1 \pm \sqrt{1 - 4\lambda \nu^3}\pf.
\end{eqnarray*}
When $4\nu^3 > \max \sigma\po S^{-1}\pf$, the rate of convergence of $(X,Y)$ is thus $\po 2\nu\pf^{-1}$, which goes to zero as $\nu$ goes to $\infty$. On the other hand, when $\nu$ goes to 0, this rate is equivalent to $\nu^2 \min \sigma(S)$, which again goes to 0.  This proves that it is not possible to reach an arbitrarily large rate of convergence with the dynamics \eqref{EqKineticGaussien}, and that the best choice for $\nu$, the variance of the velocity, is neither to be found at infinity nor at zero (which answers a question raised in \cite{MonmarcheRecuitHypo}). Of course this would be a completely different story if we were to consider
\begin{eqnarray*}
 & &  \left\{\begin{array}{rcl}
\dd X_t &=& \frac1\nu Y_t \dd t\\
\dd Y_t &=& - S X_t \dd t - \frac1\nu Y_t \dd t+ \sqrt 2 \dd B_t ,
\end{array}\right. 
\end{eqnarray*}
which also allows to sample according to $\psi_\infty$, but is no longer a kinetic process in the sense $Y$ is no more the velocity $\frac{\dd X}{\dd t}$.

The optimal value of $\nu$ in \eqref{EqKineticGaussien} is explicit if $S=\lambda I_N$ is a homothety. Indeed, in that case, for $4\lambda \nu^3 >1$  the rate $r(\nu)$ is $(2\nu)^{-1}$ and thus is decreasing, while for $4\lambda \nu^3 <1$, 
\begin{eqnarray*}
r(\nu) & = & \frac1{2\nu}\po 1 - \sqrt{1 - 4\lambda \nu^3}\pf\\
\Rightarrow\hspace{10pt} r'(\nu) &=& \frac{r\po\nu\pf}{\nu}\po -1 + \frac{6\lambda \nu^3}{\sqrt{1 - 4\lambda \nu^3}\po 1 - \sqrt{1 - 4\lambda \nu^3}\pf}\pf\\
 &=& \frac{r\po\nu\pf}{\nu}\po \frac12 + \frac3{2\sqrt{1 - 4\lambda \nu^3}}\pf\\
\end{eqnarray*}
and thus is increasing. Hence, for a homothety $S=\lambda I_N$, the rate of convergence to equilibrium in \eqref{EqKineticGaussien} is maximal for $\nu = (4\lambda)^{-\frac13}$, and in that case this optimal rate is $\po \lambda/2\pf^{\frac13}$. In comparison, the rate of convergence of 
\begin{eqnarray*}%\label{EqhomothetieReversible}
 \dd X_t  &=& - \lambda X_t \dd t + \sqrt 2 \dd B_t
\end{eqnarray*}
is $\lambda$, which is better than $\po \lambda/2\pf^{\frac13}$ if and only if $\lambda > \frac{1}{\sqrt 2}\simeq 0.7$.

\end{itemize} 

\section{Proofs}

%In this whole section we suppose $S\in \Sp$ is fixed, we call $\lambda_1 \geq \lambda_2 \geq \dots \geq \lambda_N>0$ its eigenvalues counted with their multiplicity and let $Q$ be an orthonormal matrix such that $S = Q^T\Sigma Q$ with $\Sigma = diag(\lambda_1,\dots,\lambda)$. 
Let us start with an easy lemma which will enable us to characterize $A$ in the couple $(A,D)\in \mathcal I(S)$ once $D$ is fixed.

\begin{lem}\label{LemJ}
%Let $B=(b_{i,j})_{i,j\in\cco 1,N\ccf}\in\mathcal M_{N\times N}(\R)$.
For $S\in\Sp$, the following are equivalent:
\begin{itemize}
\item  $(A,D)\in \mathcal I(S)$ % $\psi_\infty$ is invariant for $L_{B,0} = (BQx)^T\na$
\item $D\in\Spe$ with Tr$D\leq N$ and there exists $J\in\An$ such that
\begin{eqnarray*}
A &=& -(D+J)S.
\end{eqnarray*}% for all $i,j\in\cco 1,N\ccf$, $\lambda_j b_{i,j} = -\lambda_i b_{j,i}$
\end{itemize}
%Let $D\in \Spe$. Let $A_1,A_2$ bu such that $(A_i,D) \in \mathcal I(S)$, $i=1,2$. Then there there exists $J$  with $J^T=-J^T$ such that $A_1-A_2 = JS$.
\end{lem}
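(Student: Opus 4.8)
The plan is to convert the invariance condition $L'_{A,D}\psi_\infty=0$ into an explicit algebraic identity and then read off the antisymmetric matrix $J$. The conditions $D\in\Spe$ and $\text{Tr}\,D\leq N$ appear verbatim on both sides of the claimed equivalence, so the only substantive point is the relation between $A$ and $D$. The computation rests on the elementary identity $\na\psi_\infty=-Sx\,\psi_\infty$, obtained by differentiating the Gaussian density. Substituting this into the formula for $L'_{A,D}$ recalled above, the transport term $-(Ax)^T\na\psi_\infty$ becomes $x^T A^T S x\,\psi_\infty$, while the diffusion term $\na\cdot\po D\na\psi_\infty\pf$, after one further differentiation, becomes $\po x^T SDS x-\text{Tr}\po DS\pf\pf\psi_\infty$. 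Collecting the terms I expect
\[ L'_{A,D}\psi_\infty=\po x^T\po A^T S+SDS\pf x-\text{Tr}(A)-\text{Tr}\po DS\pf\pf\psi_\infty. \]

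Since $\psi_\infty>0$ everywhere, this vanishes identically if and only if the bracketed polynomial does. It is a quadratic form minus a constant, so it is the zero polynomial precisely when both the constant part vanishes, $\text{Tr}(A)+\text{Tr}\po DS\pf=0$, and the quadratic form vanishes for every $x$. The latter is the step needing care: $x^T Mx=0$ for all $x$ forces only the \emph{symmetric part} of $M$ to be zero, not $M$ itself. With $M=A^TS+SDS$ this gives $A^TS+SA+2SDS=0$, using $\po A^TS\pf^T=SA$ and $\po SDS\pf^T=SDS$.

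To produce $J$ I would exploit that $S\in\Sp$ is invertible. Setting $J:=-AS^{-1}-D$ makes $A=-(D+J)S$ hold by construction, so it remains only to check $J\in\An$. Transposing gives $J+J^T=-AS^{-1}-S^{-1}A^T-2D$, and multiplying on the left and right by $S$ turns $J+J^T=0$ into exactly $SA+A^TS+2SDS=0$; since $S$ is invertible, the two are equivalent. Hence the quadratic condition found above is equivalent to the antisymmetry of $J$.

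Finally, the trace condition is automatic: once $A=-(D+J)S$ with $J\in\An$, one has $\text{Tr}(A)=-\text{Tr}\po DS\pf-\text{Tr}\po JS\pf$, and $\text{Tr}\po JS\pf=0$ because $J$ is antisymmetric and $S$ symmetric, so that $\text{Tr}\po JS\pf=\text{Tr}\po\po JS\pf^T\pf=-\text{Tr}\po JS\pf$. Thus the constant part vanishes for free and the whole invariance condition collapses to $J\in\An$. For the converse one simply substitutes $A=-(D+J)S$ and verifies both conditions the same way. The main obstacle is therefore not conceptual but computational: getting the divergence term and, above all, the symmetrisation of the quadratic form correct; the positive-definiteness (hence invertibility) of $S$ is exactly what makes the passage to $J$ clean.
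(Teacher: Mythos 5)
Your proof is correct. All the key computations check out: the expansion $L'_{A,D}\psi_\infty=\po x^T\po A^TS+SDS\pf x-\text{Tr}(A)-\text{Tr}\po DS\pf\pf\psi_\infty$, the identification that $x^TMx\equiv 0$ constrains only the symmetric part of $M$ (giving $A^TS+SA+2SDS=0$), the explicit candidate $J=-AS^{-1}-D$ whose antisymmetry is equivalent, after conjugation by the invertible $S$, to that equation, and the remark that $\text{Tr}\po JS\pf=0$ makes the constant term vanish automatically, which closes both directions of the equivalence.

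Your route is genuinely different from the paper's, which never expands $L'_{A,D}\psi_\infty$ at all. The paper instead subtracts the generator of the reversible dynamics carrying the same diffusion matrix: it notes the one-line identity $L_{-DS,D}'\po \psi_\infty\pf = \na\cdot\po (DSx)\psi_\infty -(DSx)\psi_\infty \pf = 0$, deduces that $\po L_{A,D}'-L_{-DS,D}'\pf\psi_\infty  = -\na\cdot\po (A+DS)x\,\psi_\infty\pf$, and then invokes \cite[Lemma 1]{Lelievre2012}, which characterizes the linear vector fields $x\mapsto Bx$ that are divergence-free with respect to $\psi_\infty$ as exactly those with $B=JS$, $J\in\An$. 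That cited lemma encapsulates precisely the Gaussian calculus you carried out by hand, so your argument amounts to a self-contained re-derivation: it buys independence from the external reference and an explicit formula for $J$ in terms of $A$, $D$, $S$, at the cost of redoing the computation; the paper's decomposition buys a two-line proof and makes the structural point transparent, namely that any admissible drift is the reversible one $-DS$ plus a $\psi_\infty$-divergence-free perturbation $-JS$.
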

\begin{proof}
Fix $D\in\Spe$ such that Tr$D\leq N$. First we see that 
\[L_{-DS,D}'\po \psi_\infty\pf = \na\cdot\po (DSx)\psi_\infty -(DSx)\psi_\infty \pf = 0.\]
As a consequence
\[\po L_{A,D}'-L_{-DS,D}'\pf\psi_\infty  = -\na\cdot\po (A+DS)x\psi_\infty\pf\]
and \cite[Lemma 1]{Lelievre2012} concludes.
\end{proof}

We may now proceed to the proof of our main result.

%Let $(A,D)\in\mathcal I(S)$, $J$ be such that $A=-(D+J)S$ and $(X_t)_{t\geq 0}$ be an OUP with generator $L_{A,D}$.
\begin{proof}[Proof of Theorem \ref{TheoPrincipal}]
Let $(A,D)\in \mathcal I(S)$, so that by the previous lemma \ref{LemJ}, $D\in\Spe$ such that Tr$D\leq N$, $J\in\An$ and $A=-(D+J)S$. Following \cite{Lelievre2012} we write
\begin{eqnarray*}
(D+J)S &=& (S)^{-\frac12}\po \widetilde D + \widetilde J\pf  (S)^{\frac12}
\end{eqnarray*}
where $\widetilde D = (S)^{\frac12} D (S)^{\frac12}$ and similarly for $\widetilde J$. Note that $M \leftrightarrow (S)^{\frac12} M(S)^{\frac12}$ is a bijection which leaves $\Spe$ and $\An$ invariant, and that
\begin{eqnarray*}
\rho(A) &=& \rho\po - \widetilde D - \widetilde J\pf.
\end{eqnarray*}
From \cite[Propositions 1 and 4]{Lelievre2012},
\begin{eqnarray*}
\max\left\{ \rho\po - \widetilde D - \widetilde J\pf,\ \widetilde J\in\An\right\} &=& \frac{\text{Tr}\widetilde D}{N}.
\end{eqnarray*}
One may object that \cite[Propositions 1 and 4]{Lelievre2012} are written for an invertible matrix, which is not necessarily the case for $\widetilde D$. It can be seen that this restriction is not necessary in the proof; or to save the reader from a careful check of these proofs, we may note that $\widetilde D + \varepsilon I_N$ falls within the scope of \cite{Lelievre2012}, which yields the same result. Hence
\begin{eqnarray*}
\max\left\{ \rho\po A\pf,\ (A,D)\in \mathcal I(S)\right\} &=& \frac{1}{N} \max\left\{ \text{Tr}\po (S)^{\frac12} D (S)^{\frac12} \pf,\ D\in\Spe,\ \text{Tr}D\leq N\right\}. 
\end{eqnarray*}
Let $Q$ be an orthonormal matrix such that $S^{\frac12}  = Q^T\Sigma Q$ with $\Sigma = diag(\sqrt{\lambda_1},\dots,\sqrt{\lambda_N})$.
\begin{eqnarray*}
\text{Tr}\po (S)^{\frac12} D (S)^{\frac12} \pf &=& \text{Tr}\po \Sigma Q D Q^T \Sigma\pf \\
& \leq & \max \{\lambda_i,\ i=1..N\}  \text{Tr}\po   Q D Q^T  \pf \\
&= & \max \sigma(S)  \text{Tr}\po    D  \pf .
\end{eqnarray*}
We have proved 
\begin{eqnarray*}
\max\left\{ \rho\po A\pf,\ (A,D)\in \mathcal I(S)\right\} &\leq & \max \sigma(S) 
\end{eqnarray*}
and to prove equality holds we need to find $D\in\Spe$ with Tr$D\leq N$ such that Tr$\widetilde D= N \max \sigma(S)$. Let $i\in\cco 1,N\ccf$ be such that $\lambda_i = \max \sigma(S)$, let $E_{i,i}$ be the $N\times N$ matrix with all coefficients being zero except the coefficient $(i,i)$ being equal to 1, and set $D=N Q^T E_{i,i} Q$. In other words $D=Nv_iv_i^T$ where $v_i$ is a normalized eigenvector of $S$ associated to $\lambda_i$. In that case
\begin{eqnarray*}
\widetilde D &=& N Q^T \Sigma Q Q^T E_{i,i} Q  Q^T \Sigma Q\\
&=& \lambda_i D
\end{eqnarray*}
which concludes the proof.
%In fact by taking any convex combination of the $E_{j,j}$'s  in this argument we can reach  Tr$\widetilde D = \lambda$ for all $\lambda$ in the convex hull of $\sigma(S)$. To reach $\lambda<\min\sigma(S)$, consider a non invertible diffusion matrix $D$ so that $\rho(-DS) =0$. Since $\rho$ is continuous, we conclude by considering the operator $tL_{-SD,D}$
\end{proof}

\begin{proof}[Proof of Theorem \ref{TheoPrincipal2}]
Without loss of generality (up to an orthonormal change of variables) we can assume the vector $v$ with all coordinates being equal to 1 is an eigenvector of $S$ associated to $\lambda=\max \sigma (S)$. Let $D=v v^T$, which is the matrix with all coefficients equal to 1. Let $0<\nu_1<\nu_2<\dots < \nu_N$  (to be specified later) and $Q = diag(\nu_1,\dots,\nu_N)$. Note that the eigenvectors of $Q$ are obviously the canonical basis vectors $(e_1,\dots,e_N)$, which satisfy $e_i^T D e_i = 1 = \text{Tr}D/N$ for all $i$. Let $\widetilde J$ be the antisymmetric matrix defined by
\[\po \widetilde J\pf_{k,l} =  \frac{\nu_k+\nu_l}{\nu_k-\nu_l}\]
if $k\neq l$ and 0 else. According to \cite[Lemma 2 and Equation (38)]{Lelievre2012}, 
\begin{equation}\label{inter}
 Q \widetilde J- \widetilde J Q  = -DQ-QD +2Q%\\
%Q^{-1}\widetilde J  -   \widetilde J Q^{-1} &=& -Q^{-1} D - D Q^{-1} +2Q^{-1}
\end{equation}
and $\sigma\po D + \widetilde J \pf \subset 1+i\mathbb R$. Let
\[A=-\lambda S^{-\frac12}\po D + \widetilde J \pf S^{\frac12} = -\po D+ S^{-\frac12}\widetilde J S^{-\frac12}\pf S,\]
so that $(A,D)\in\mathcal I(S)$ according to Lemma \ref{LemJ}. Recall that $L_{A,D}^* = L_{C,D}$ with
\begin{eqnarray*}
C&:=&-2DS - A \\
&=& -\lambda S^{-\frac12}\po D - \widetilde J \pf S^{\frac12}.
\end{eqnarray*} 
and note that $\sigma\po D - \widetilde J \pf = \sigma\po D + \widetilde J \pf^T  \subset 1+i\mathbb R$.

Let $\alpha$ be the function $r>0\mapsto \alpha(r) = r \ln r $ (or  $r\in\R\mapsto \alpha(r) = \frac12 r^2 $ if one desires to deal with $L^2$ decay rather than entropy), so that $\alpha''(r)$ is $r^{-1}$ (or 1). According to \cite[Lemma 8]{MonmarcheGamma}, for all $M\in\Sp$ and for all $h>0$, denoting by $h_t = e^{t L_{C,D}} h$, 
\begin{eqnarray*}
\partial_t \po \alpha''(h_t) \po \na h_t\pf^T M \na h_t\pf & \leq & 2\alpha''(h_t) \po \na h_t\pf^T M C^T \na h_t
\end{eqnarray*}
Applying this with $M=S^{-\frac12}QS^{-\frac12}$, we obtain
\begin{eqnarray*}
\partial_t \po \alpha''(h_t) \left|Q^{\frac12 }S^{-\frac12}\na h_t\right|^2\pf & \leq & -2\lambda \alpha''(h_t) \po \na h_t\pf^T S^{-\frac12}Q\po D + \widetilde J\pf S^{ -\frac12}\na h_t\\
&=& -\lambda \alpha''(h_t) \po S^{ -\frac12}\na h_t\pf^T \po QD+DQ +Q\widetilde J -\widetilde JQ\pf S^{-\frac12}\na h_t\\
&=& -2\lambda \alpha''(h_t) \po S^{-\frac12}\na h_t\pf^T Q S^{-\frac12}\na h_t
\end{eqnarray*}
where the first equality comes from the fact that $Q$ and $D$ are symmetric and $\widetilde J$ is antisymmetric, and the last equality from \eqref{inter}.
Hence
\begin{eqnarray*}
  \alpha''(h_t) \left|Q^{\frac12 }S^{-\frac12} h_t\right|^2  & \leq &  e^{-2\lambda (t-s)}  \alpha''(h_s) \left|Q^{\frac12 }S^{{-\frac12}} h_s\right|^2 .
\end{eqnarray*}
The log-Sobolev inequality for the standard Gaussian distribution $\gamma(\dd x) = \frac{1}{\sqrt{2\pi}}e^{-\frac12|x|^2}\dd x$ reads
\begin{eqnarray*}
 \ent_\gamma f & \leq & \frac12 \int \frac{|\na f|^2}{f} \dd \gamma,
\end{eqnarray*}
for all $f>0$ such that the r.h.s is finite. By the change of variable $z=S^{\frac12}x$ it yields
\begin{eqnarray*}
\ent_{\psi_\infty} f &\leq & \frac12 \int \frac{(\na f)^T S^{-1} \na f}{f} \dd \psi_\infty.
\end{eqnarray*}
Now
{
\begin{eqnarray*}
\ent_{\psi_\infty}\po h_t\pf & \leq & \frac{1 }{2} \int \frac{(\na h_t)^T S^{-1} \na h_t}{h_t} \dd \psi_\infty\\
& \leq & \frac{1}{2\nu_1}  \int \frac{ \left|Q^{\frac12 }S^{-\frac12}\na h_t\right|^2}{h_t} \dd \psi_\infty\\
& \leq & \frac{ e^{-2\lambda(t-s)}\nu_N }{2\nu_1} \int \frac{ \left|S^{-\frac12}\na h_s\right|^2}{h_s} \dd \psi_\infty,\\
& \leq & \frac{ \nu_N\ }{2\nu_1\min\sigma(S)} \,e^{-2\lambda(t-s)}\,\int \frac{ \left|\na h_s\right|^2}{h_s} \dd \psi_\infty.
\end{eqnarray*}
}
Finally the elliptic reversible generator $L_{-S,I_N}$ satisfies the Bakry-Emery criterion $\Gamma_2\geq 0$ (see~\cite{BGL} for instance, more precisely \cite[Equation 1.16.5 p. 72]{BGL} together with \cite[Theorem 5.5.2.(v), p.259]{BGL} with $\rho=0$ and integrated with respect to $\psi_\infty$), so that if now $h_t = e^{tL_{-S,I_N}} h$,
\begin{eqnarray*}
\int \frac{ \left|\na h_s\right|^2}{h_s} \dd \psi_\infty & \leq & \frac1{s} \ent_{\psi_\infty} h.
\end{eqnarray*}
We can then take $\nu_N$ arbitrarily close to $\nu_1$ to get the first part of Theorem \ref{TheoPrincipal2}. On the other hand, following \cite[Remark 8]{Lelievre2012}, if we choose $\nu_k = N+k$ (so that $\nu_N \leq 2\nu_1$), using that for any $T\in\Sp$, $\| AT\|_F \leq \max\sigma(T)\| A\|_F$, we get
\begin{eqnarray*}
\| A\|_F^2 & = & \lambda^2 \| S^{-\frac12}  \po  D+\widetilde J \pf S^{\frac12}\|_F^2\\
& \leq & \frac{\lambda^3 }{\min\sigma(S)}  \po N +  \sum_{j\neq k}\po {\frac{\nu_k+\nu_j}{\nu_k-\nu_j}}+1\pf^2 \pf\\
&\leq & \frac{\lambda^3 }{\min\sigma(S)}\po N+ N(N-1)(4N)^2\pf\\
&\leq & \frac{16\lambda^3 N^4 }{\min\sigma(S)}.
\end{eqnarray*}
\end{proof}

Note that an optimal $(A,D)$ is explicitly constructed in this proof. The procedure may be decomposed in two steps: first, exhibit a normalized eigenvector $v$ of $S$ associated to $\max\sigma(S)$, and set $D=N v v^T$. The second step answers the following general question: given any $D\in\Spe$, and writing $\widetilde D = (S)^{\frac12} D (S)^{\frac12}$, how to construct an optimal $\widetilde J\in\An$ so that
\begin{eqnarray*}
 \rho\po - \widetilde D - \widetilde  J\pf  &=& \frac{\text{Tr}\widetilde D}{N}\ ?
\end{eqnarray*}
By a straightforward adaptation of \cite[Algorithm p. 252]{Lelievre2012} (just change $S$ by $\widetilde D$ everywhere), construct an orthonormal basis $(\psi_1,\dots,\psi_N)$ such that $\psi_i^T\widetilde  D \psi_i = \text{Tr}\widetilde D/N$ for all $i$. Let $P$ be the matrix whose columns are the $\psi_i$'s, $\nu_k = N+k$ for $k\in\cco 1,N\ccf$ and $\widehat J$ be the antisymmetric matrix with coefficients ${\frac{\nu_k+\nu_l}{\nu_k-\nu_l}} \psi_k^T\widetilde D\psi_l$ for $k\neq l$. Then
\[ \widetilde  J :=  P \widehat J P^{-1}\]
is a solution of the problem, and we conclude by setting $A= -DS - \po S\pf^{-\frac12}{\widetilde J}\po S\pf^{\frac12}$.
\section{Numerical illustrations}

In dimension 2, consider $S=diag(\varepsilon,1)$. For any $h\in\R$, the corresponding $\psi_\infty$ is the unique equilibrium of
\begin{eqnarray}\label{EquaNumeriqueElliptique}
\dd X_t &=& - \begin{pmatrix}
\varepsilon & -h\\ \varepsilon h & 1
\end{pmatrix} X_t\dd t + \sqrt 2 \dd B_t
\end{eqnarray}
where $B=(B^1,B^2)$ is a 2-dimensional Brownian motion. It is also an equilibrium for 
\begin{eqnarray}\label{EquaNumeriqueHypoelliptique}
\dd Z_t &=& - \begin{pmatrix}
0 & -h\\ \varepsilon h & 2
\end{pmatrix} Z_t \dd t + \sqrt 2 \begin{pmatrix}
0\\ \dd  B^1+ \dd B^2
\end{pmatrix}
\end{eqnarray}
which is hypoelliptic as soon as $h\neq 0$. When $h$ is large enough, away from the origin (so that the random forces are small with respect to the deterministic drift), the behaviours of $X$ and $Z$ are similar, mostly driven by  a fast rotation, while the first coordinate of the reversible process solving \eqref{EquaNumeriqueElliptique} with $h=0$ moves slower, and thus covers the space less efficiently (see Fig.~\ref{Figure1}, where the parameter $p$ is the step size of the Euler Scheme). Note that in the case of $Z$, even if this rotation is randomly perturbed, since $\dd Z^1 = hZ^2 \dd t$, the process always goes from left to right in the lower half-plane $\{(x,y),\ y<0\}$ and from right to left in the upper one. 

\begin{figure}
 \centering \caption{Trajectories for a multi-scale equilibrium.}\label{Figure1}
\includegraphics[scale=0.7]{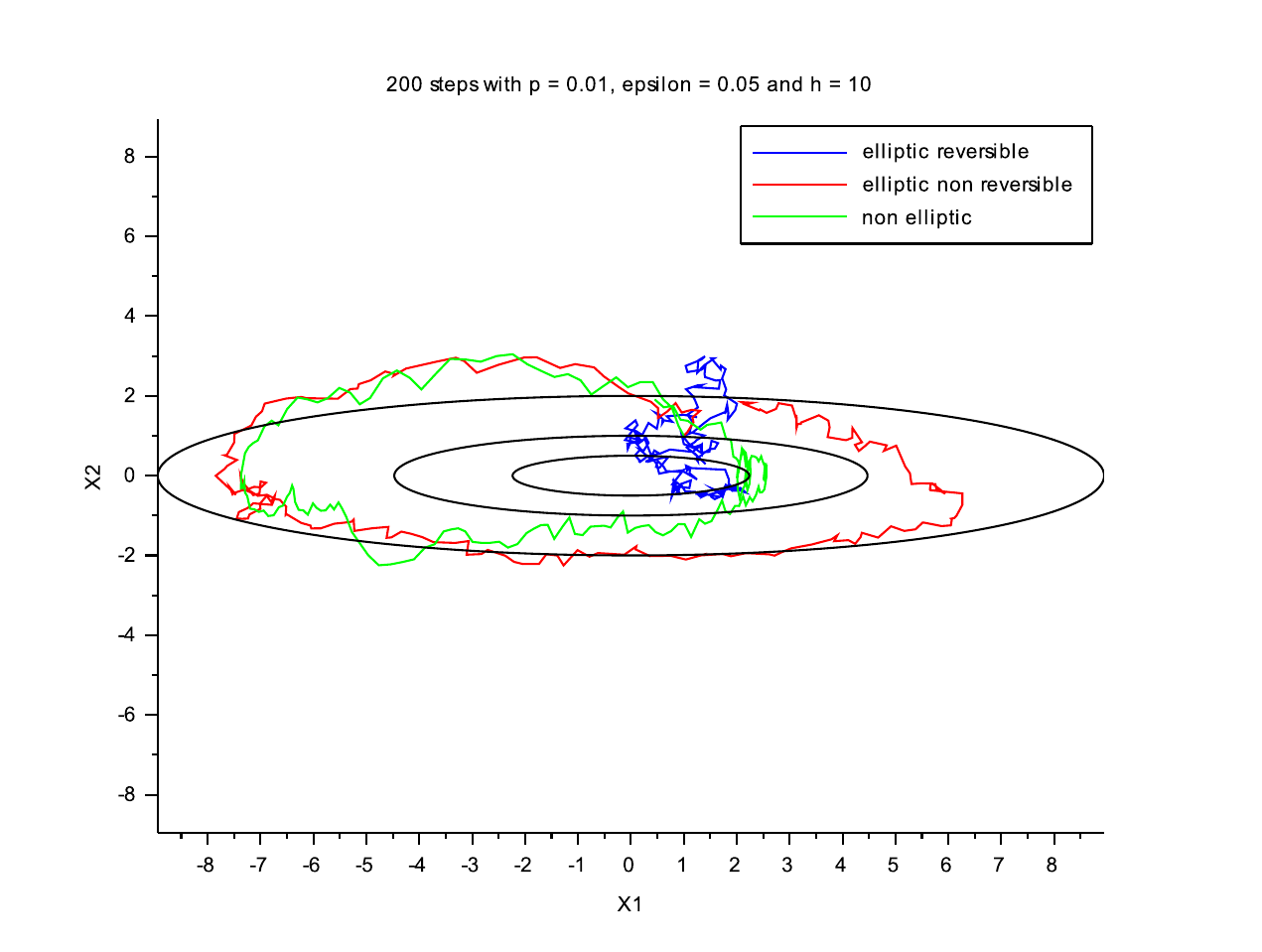}
\end{figure}

In \eqref{EquaNumeriqueElliptique}, the optimal rate $\frac{1+\varepsilon}{2}$ is obtained for $h^2 \geq \frac{(1+\varepsilon)^2}{4\varepsilon} - 1$ while in \eqref{EquaNumeriqueHypoelliptique} the optimal rate 1 is obtained for $h^2\geq \frac{1}{\varepsilon}$. For instance if we chose $h = \sqrt{\frac2\varepsilon}$, then both conditions are fulfilled and  in both cases the drift matrix is diagonalizable with two conjugated distinct eigenvalues. For a diagonalizable $2\times 2$ matrix $A$ with eigenvalues $\lambda_1 = \bar \lambda_2 \neq  \lambda_2$, denoting by $\nu = |\lambda_1-\lambda_2|$ and by $\alpha = |\bar v_1^T v_2|^{-1}$ where $(v_1,v_2)$ is a normalized eigenbasis of $A$, the Hermitian matrix norm of $e^{tA}$ can be explicitly computed (see e.g. \cite[Lemma 3]{Volte-Face}) as
\begin{eqnarray*}
\left\| e^{tA}\right\|^2 &=& e^{2 \Re(\lambda_1) t}\po 1 + \frac{2}{\sqrt{\frac{2(\alpha^2 -1)}{1- \cos(\nu t)}+1}-1}\pf.
\end{eqnarray*}
This is represented in Figure \ref{Figure4} (with $\varepsilon=0.05$ for every curves, $h=\sqrt{\frac2\varepsilon}$ for the second and third ones and $h=\sqrt{\frac1\varepsilon}$ for the last one). A large $h$ seems to improve the prefactor, and indeed, note that
\begin{eqnarray*}
M(h) \ :=\ \underset{t\geqslant 0} \max\po 1 + \frac{2}{\sqrt{\frac{2(\alpha^2 -1)}{1- \cos(\nu t)}+1}-1}\pf &=&  1+\frac{2}{\alpha -1}
\end{eqnarray*} 
and that, normalizing the eigenvectors $v_j = (-h,\lambda_j)$ for $j=1,2$, we can compute
\[ \alpha^{-2} \ =\  \frac{\left| h^2 + \po 1 + i \sqrt{h^2\varepsilon - 1}\pf^2\right|^2}{\po h^2 + 1 + h^2 \varepsilon - 1\pf^2} 
\ =\ \frac{ \po 1 - \varepsilon\pf^2 + 4   h^{-2}}{\po1+\varepsilon\pf^2}, \]
which decreases with $h^2$, together with the prefactor. As $h$ goes to infinity, $M(h)$ goes to its minimum, which is $\frac{1}{\varepsilon}$ (if $\varepsilon\leq 1$).
 
\begin{figure}
 \centering \caption{Norms of the drift matrix exponentials}\label{Figure4}
\includegraphics[scale=0.7]{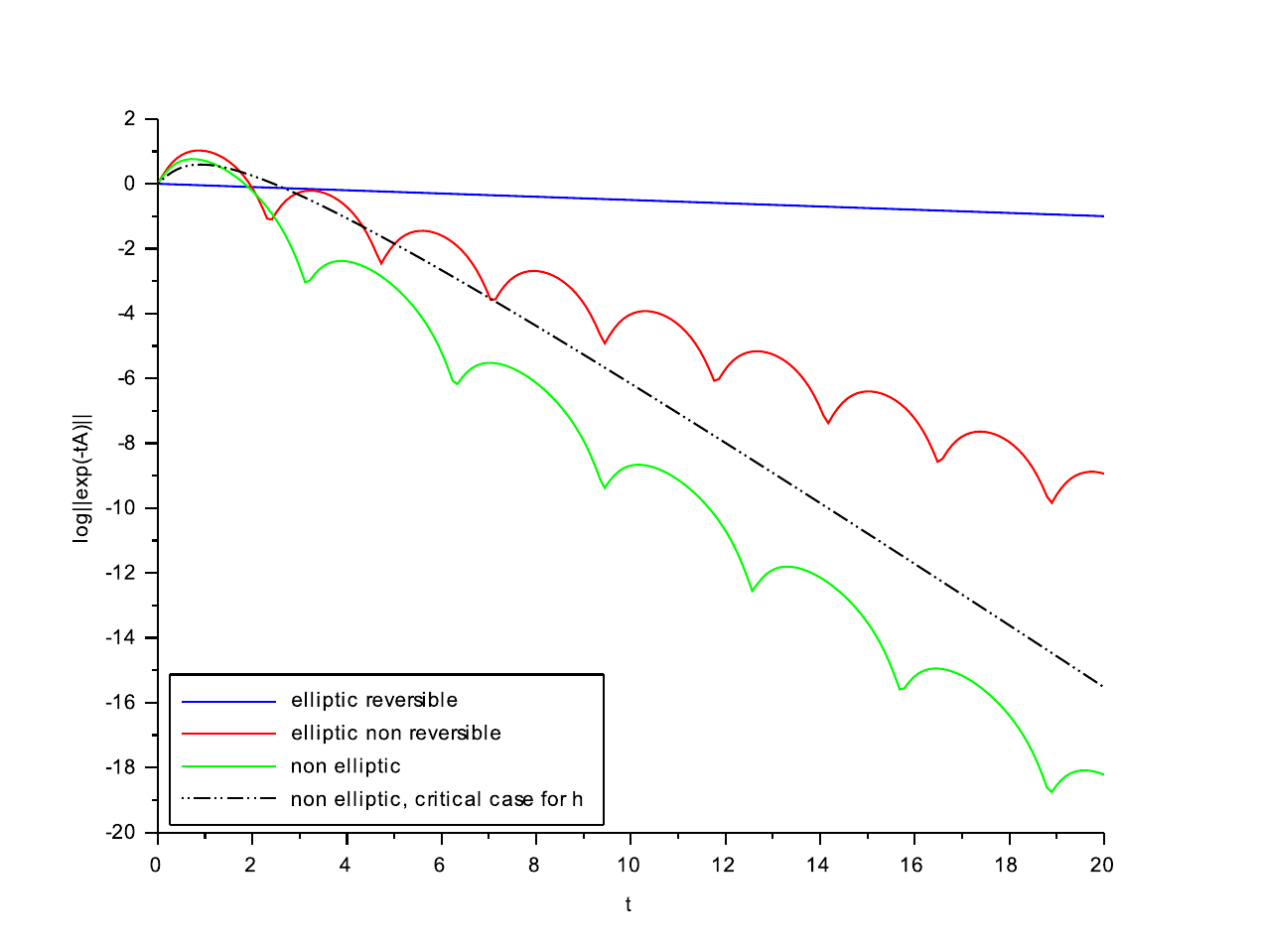}
\end{figure}
\bigskip

As remarked in the Introduction, how to boost the speed of convergence of a Markov process to sample a Gibbs measure is perhaps less relevant for quadratic potentials. There are thus two distinct problems that we have in mind for the future. The first one deals with the direct generalization of our result when we replace the Gaussian measure $\psi_\infty$ by $e^{-V}$ where $V$ satisfies $Hess(V)\ge S>0$, with $S$ constant positive symmetric. Is it possible to add a divergence free drift and a potentially degenerate (constant) diffusion matrix so that the rate of convergence to equilibrium is $\max(\sigma(S))$? The question is also of great interest when the dynamics is metastable, namely when $V$ has several local minima. A toy problem of this phenomenon would be to consider in dimension 1 (even if MCMC algorithm usually outperforms deterministic algorithms only in large dimension) a two-wells potential
\begin{eqnarray*}
V(x) &=& a x^4 - b x^2
\end{eqnarray*}
with $a,b>0$. Then $V$ has two minima $-b^2/(4a)$ attained at $x=\pm \sqrt{b/(2a)}$ and separated by a local maxima 0 at $x=0$. Depending on the energy barrier $V(0)-V\po\sqrt{b/(2a)}\pf = b^2/(4a)$  to overcome in order to go from one catchment area to the other, the reversible Fokker-Planck diffusion \eqref{EquaFP} will take a long time to achieve such a crossing. In Figures \ref{Figure2} and \ref{Figure3} are represented two such trajectories over different periods, along with trajectories of the first coordinate of a kinetic Langevin diffusion \eqref{EquaLangevin} (in each case both the reversible and the kinetic diffusions are driven by the same Brownian motion). As discussed at the end of Section \ref{SectionMainResult}, for the Langevin process there should be another parameter to tune, the variance $\nu$ of the velocity at equilibrium. Since the optimal choice of $\nu$ is already non trivial in the quadratic case, we won't address it here in this metastable context: for now, our considerations are only qualitative.

\begin{figure}%[!h]
 \centering\caption{Metastable trajectories in short time.}\label{Figure2}
\includegraphics[scale=0.7]{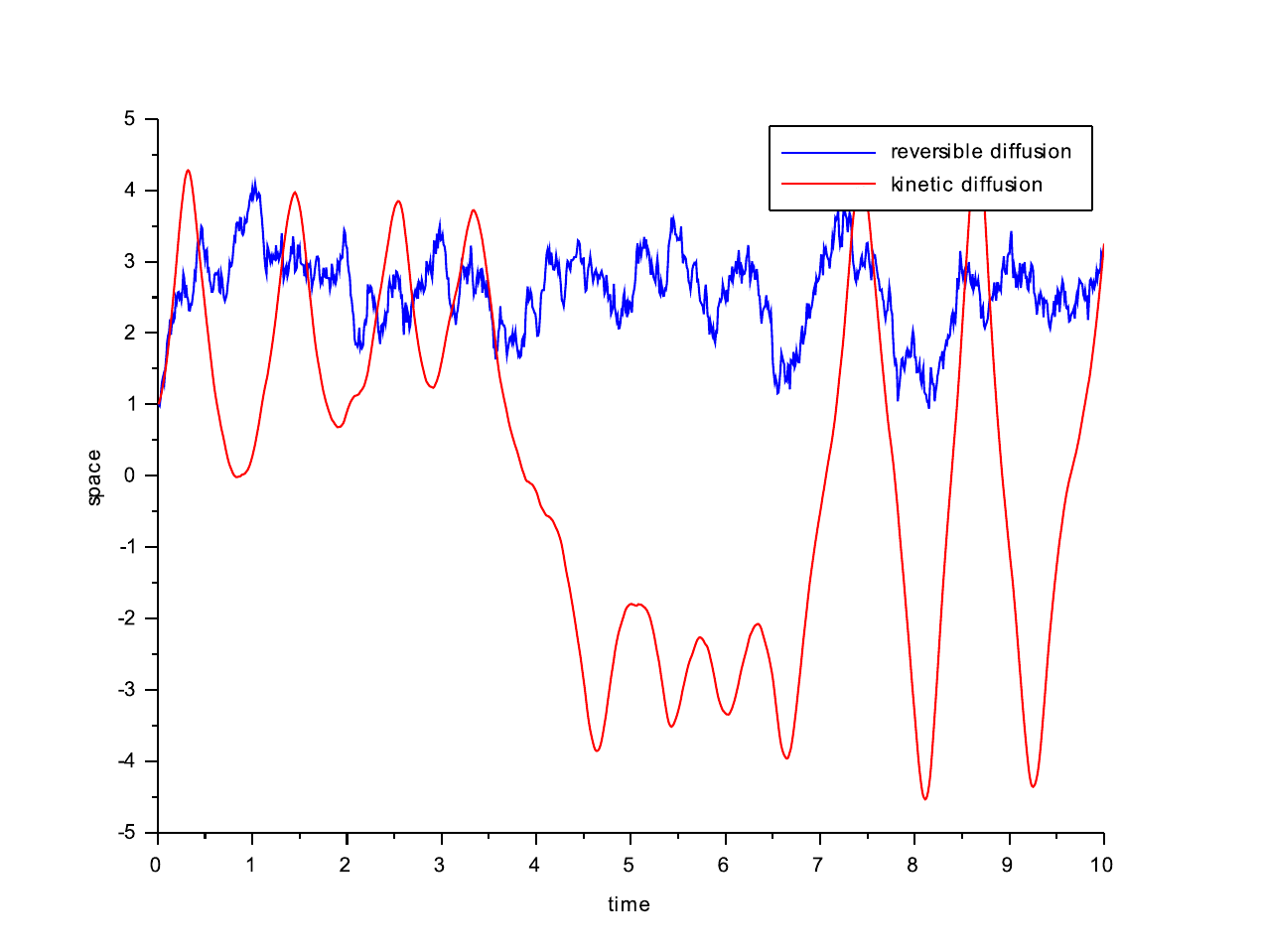}
\end{figure}

\begin{figure}%[!h]
 \centering \caption{Metastable trajectories in longer time.}\label{Figure3}
\includegraphics[scale=0.7]{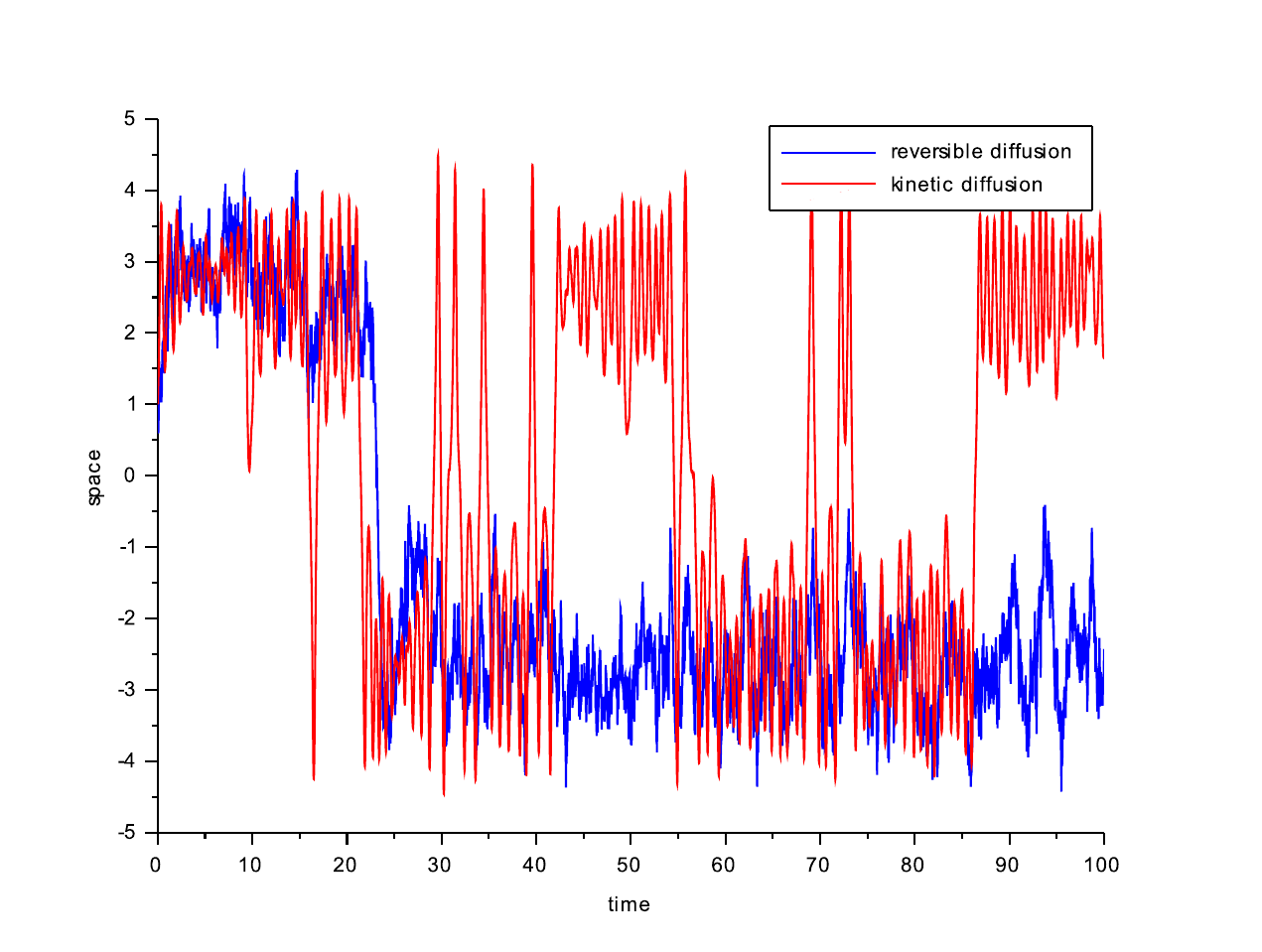}
\end{figure}

The first point to comment in Figure \ref{Figure2} is that the trajectory is smoother in the kinetic case than in the reversible one, which is obvious since in the first case it is 3/2-Hölder continuous while in the second one it is only 1/2-Hölder continuous. Second, due to its inertia, the trajectory in the kinetic case shows large oscillations in which kinetic and potential energy successively convert one to the other. In particular from the times $t\simeq 6.5$ to $10$ we can see the process has a high level of total energy and thus these large oscillations cross the energy barrier at $x=0$ without difficulty. At some point the total energy will decrease sufficiently for the process to stay trapped in the vicinity of one of the two minima, which has then a reasonable chance to be different from the one from which it started before the energy level got high.\\

That way we would interpret Figure \ref{Figure3} as an illustration to the fact the Langevin dynamics deals more efficiently with metastability (or at least energy barriers) than the reversible Fokker-Planck diffusion. With Theorem \ref{TheoPrincipal2} in mind, we could also interpolate from these figures the behaviour of a process that switch at random times from Equation \eqref{EquaNumeriqueElliptique} to \eqref{EquaNumeriqueHypoelliptique} (or anything else in that spirit). However it is difficult to export an intuition based on a toy model in dimension 1 and with a fixed set of parameter (especially the variance of the velocity in \eqref{EquaNumeriqueHypoelliptique}) to a more general case.

\vspace{20pt}

\noindent{\bf Acknowledgements}\\
The authors would like to thank the referees for well pointed remarks which have led to a significant improvement of the presentation of the paper.


\begin{thebibliography}{10}

\bibitem{Arnold1}
A.~Arnold, E.~Carlen, and Q.~Ju.
\newblock Large-time behavior of non-symmetric {F}okker-{P}lanck type
  equations.
\newblock {\em Commun. Stoch. Anal.}, 2(1):153--175, 2008.

\bibitem{Arnold2014}
A.~{Arnold} and J.~{Erb}.
\newblock {Sharp entropy decay for hypocoercive and non-symmetric Fokker-Planck
  equations with linear drift}.
\newblock {\em ArXiv e-prints}, September 2014.

\bibitem{BGL}
D.~Bakry, I.~Gentil, and M.~Ledoux.
\newblock {\em Analysis and Geometry of {M}arkov Diffusion Operators}, volume
  348 of {\em Grundlehren der mathematischen Wissenschaften}.
\newblock Springer, 2014.

\bibitem{BolleyGentil}
F.~Bolley and I.~Gentil.
\newblock Phi-entropy inequalities for diffusion semigroups.
\newblock {\em J. Math. Pures Appl. (9)}, 93(5):449--473, 2010.

\bibitem{DiaconisMiclo}
P.~Diaconis and L.~Miclo.
\newblock On the spectral analysis of second-order {M}arkov chains.
\newblock {\em Ann. Fac. Sci. Toulouse Math. (6)}, 22(3):573--621, 2013.

\bibitem{Gadat2013}
S.~Gadat and L.~Miclo.
\newblock Spectral decompositions and {$\Bbb L^2$}-operator norms of toy
  hypocoercive semi-groups.
\newblock {\em Kinet. Relat. Models}, 6(2):317--372, 2013.

\bibitem{GC11}
M.~Girolami and B. Calderhead.
\newblock Riemann manifold {L}angevin and Hamiltonian {M}onte-{C}arlo methods.
\newblock{J. Royal. Stat. Soc., Series B}, 73(2):1--37, 2011.

\bibitem{HHS1}
C.R.~Hwang, S.Y.~Hwang-Ma and S.J.~Sheu.
\newblock Accelerating Gaussian diffusions.
\newblock {\em Ann. Appl. Probab.}, 3(3):897--913, 1993.

\bibitem{HHS2}
C.R.~Hwang, S.Y.~Hwang-Ma and S.J.~Sheu.
\newblock Accelerating diffusions.
\newblock {\em Ann. Appl. Probab.}, 15(2):1433--1444, 2005.

\bibitem{Lelievre2012}
T.~{Leli{\`e}vre}, F.~{Nier}, and G.~A. {Pavliotis}.
\newblock {Optimal non-reversible linear drift for the convergence to
  equilibrium of a diffusion}.
\newblock {\em Journal of Statistical Physics}, 152(2):237--274, 2013
 


\bibitem{LelievreFreeEnergy}
T.~Leli\`evre, M.~Rousset, and G.~Stoltz.
\newblock {\em Free energy computations: A mathematical perspective}.
\newblock Imperial College Press, 2010.


\bibitem{MonmarcheGamma}
P.~{Monmarch{\'e}}.
\newblock {Generalized $\Gamma$ calculus and application to interacting
  particles on a graph}.
\newblock {\em ArXiv e-prints}, October 2015.

\bibitem{MonmarcheRecuitHypo}
P.~{Monmarch{\'e}}.
\newblock {Hypocoercivity in metastable settings and kinetic simulated
  annealing}.
\newblock {\em ArXiv e-prints}, February 2015.

\bibitem{Volte-Face}
L.~{Miclo}, P.~{Monmarch{\'e}}.
\newblock{Étude spectrale minutieuse de processus moins indécis que les autres}.
\newblock {Lecture Notes in Mathematics}, September 2012.


\bibitem{Lelievre2006}
A.~{Scemama}, T.~{Leli{\`e}vre}, G.~{Stoltz}, and M.~{Caffarel}.
\newblock An efficient sampling algorithm for variational monte carlo.
\newblock {\em Journal of Chemical Physics}, 125, September 2006.

\end{thebibliography}
\end{document}